\theoremstyle{plain}
\newtheorem{thm}{Theorem}[section]
\theoremstyle{definition}
\newtheorem{rem}[thm]{Remark}
\title{Geodesic curves on Shimura surfaces}
\author{Ted Chinburg \and Matthew Stover}
\date{\today}
\begin{document}

\maketitle

\begin{center}
Dedicated to the memory of Colin Maclachlan
\end{center}

\section{Introduction}

A Shimura surface is the quotient of either the product $\mathbf{H}^2 \times \mathbf{H}^2$ of two hyperbolic planes or the unit ball $\mathbf{H}_{\mathbb{C}}^2$ in $\mathbb{C}^2$ by an irreducible arithmetic lattice. Examples include the normal quasiprojective varieties associated with the Hilbert and Picard modular groups, along with the solutions to many moduli problems for principally polarized abelian varieties. Special amongst the immersed projective algebraic curves on these surfaces are those which are geodesic for the metric descending from the universal covering. In this paper, we completely classify the geodesic curves on Shimura surfaces up to commensurability. A consequence of this classification is the following.

\begin{thm}\label{thm:Concise}
Let $S$ be a Shimura surface. If $S$ contains one geodesic curve, then it contains infinitely many that are pairwise incommensurable.
\end{thm}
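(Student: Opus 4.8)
The plan is to deduce the statement from a classification of geodesic curves on $S$ in terms of algebraic subgroups of the $\mathbb{Q}$-group $\mathbf{G}$ underlying $S$, and then, whenever such a curve exists, to manufacture infinitely many more by twisting a quaternion algebra over a fixed number field. First I would record the elementary classification of holomorphic totally geodesic copies of $\mathbf{H}^2$ in the two relevant symmetric spaces: in $\mathbf{H}^2 \times \mathbf{H}^2$ they are the graphs of isometries $\mathbf{H}^2 \to \mathbf{H}^2$ — the ``factor'' disks $\{z\} \times \mathbf{H}^2$ have dense image in $S$ by irreducibility of the lattice, so they contribute nothing — while in $\mathbf{H}_{\mathbb{C}}^2$ they are the complex geodesics $\mathbf{H}_{\mathbb{C}}^1 \hookrightarrow \mathbf{H}_{\mathbb{C}}^2$, i.e. the traces of the nondegenerate $2$-dimensional $E$-subspaces when $\mathbf{G}$ is the unitary group of a rank-$3$ Hermitian form over a CM field $E$. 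Feeding this into the arithmetic of $\mathbf{G}$ — which is exactly where the classification occupying the body of the paper enters — a geodesic curve on $S$ corresponds to a $\mathbb{Q}$-subgroup of the form $\mathrm{Res}_{k/\mathbb{Q}} D^{1}$, where $D$ is a quaternion algebra over a number field $k$ split at a unique archimedean place, together with an embedding of a prescribed shape. Concretely, in the $\mathrm{SL}_2 \times \mathrm{SL}_2$--type case, writing $\mathbf{G} = \mathrm{Res}_{F/\mathbb{Q}} B^{1}$ for a totally real field $F$ and a quaternion algebra $B$ over $F$ split at exactly two archimedean places, $k$ must be an index-$2$ subfield of $F$ over which the two split places of $B$ are conjugate, and $D \otimes_k F \cong B$; in the ball case $k = E^+$, and $D^{1} \cong \mathrm{SU}(h|_W)$ for a nondegenerate rank-$2$ subform of $h$, whose signature at each archimedean place is forced by that of $h$.

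The second ingredient is classical: commensurable arithmetic Fuchsian groups have the same invariant trace field and invariant quaternion algebra (Takeuchi; Maclachlan--Reid), and the uniformizing group of the curve attached to $\mathrm{Res}_{k/\mathbb{Q}} D^{1}$ above is arithmetic with invariant quaternion algebra $D$, which is recovered from its commensurability class. Hence, given one geodesic curve, it suffices to exhibit infinitely many admissible subgroups as above whose associated quaternion algebras are pairwise nonisomorphic. In the $\mathrm{SL}_2 \times \mathrm{SL}_2$--type case, suppose $D/k$ with $D \otimes_k F \cong B$ is given; choose primes $\mathfrak{p}_1, \mathfrak{p}_2$ of $k$ that are inert in $F/k$ and unramified in $D$, and let $D'$ be the quaternion algebra over $k$ whose ramification set is the symmetric difference $\mathrm{Ram}(D) \triangle \{\mathfrak{p}_1, \mathfrak{p}_2\}$. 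Since a division quaternion algebra over a local field is split by any quadratic extension, $D' \otimes_k F \cong B$ still holds, and $D'$ has the same archimedean behavior as $D$, so $\mathrm{Res}_{k/\mathbb{Q}} (D')^{1} \hookrightarrow \mathbf{G}$ produces a geodesic curve on $S$. As $\{\mathfrak{p}_1, \mathfrak{p}_2\}$ ranges over an infinite set of such pairs — infinitely many exist by Chebotarev, since $F/k$ is quadratic — the algebras $D'$ are pairwise nonisomorphic, so the curves are pairwise incommensurable. In the ball case, write the rank-$1$ form orthogonal to $W$ as $\langle a \rangle$ with $a \in (E^+)^\times$; the required signature profile is an open cone condition on $a$, every rank-$3$ Hermitian form over a local field is isotropic hence universal, and so by the Hasse principle of Landherr every $a$ in that cone is represented by $h$ and yields $W = \langle a \rangle^{\perp}$. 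The discriminant of $h|_W$ is $\mathrm{disc}(h)/a$ in $(E^+)^\times / N_{E/E^+}(E^\times)$, which determines the quaternion algebra $B_W$ over $E^+$ with $B_W^{1} \cong \mathrm{SU}(h|_W)$; prescribing odd valuations of $a$ at finitely many inert primes while staying in the cone (weak approximation) realizes infinitely many discriminant classes, hence infinitely many pairwise nonisomorphic $B_W$, hence infinitely many pairwise incommensurable curves on $S$.

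The hard part will be making precise, at the finitely many ``bad'' places, the two steps I have glossed over. The first is the dictionary itself — in particular that every admissible subgroup gives a curve lying on $S$, not merely on a surface commensurable to $S$, and that an admissible subgroup exists precisely when $S$ contains at least one geodesic curve (so that the hypothesis is used exactly once) — and this is the content of the classification and requires control at the primes dividing the discriminants of $F$, $E$, $B$ and $h$. The second is checking that the twisting operations preserve embeddability: the delicate cases are the finite places at which $B$ is a division algebra, or at which $h$ is anisotropic and $E/E^+$ ramifies, where one must verify by an explicit local computation that the modified algebra or Hermitian form still embeds with the correct local invariants. I expect this embeddability bookkeeping — spread across the various subfield sub-cases of the classification rather than concentrated in a single difficult point — to be where most of the effort goes; once it is in hand, the counting of quaternion-algebra invariants is immediate and the theorem follows.
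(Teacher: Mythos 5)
Your overall architecture --- classify the geodesic curves by quaternion algebras over a distinguished subfield, then twist the ramification set at finite primes with prescribed splitting behavior in the quadratic extension and invoke Chebotarev --- is exactly the paper's, and in the $\mathbf{H}^2\times\mathbf{H}^2$ case your argument (symmetric-difference twist at two primes of $k$ inert in $F/k$, using that the unramified quadratic extension splits a local division algebra) is essentially verbatim the published proof of Theorem \ref{thm:h2xh2}. Where you genuinely diverge is the ball case. The paper runs the correspondence from the quaternion algebra to the curve: given $B/K_0$ split by $K$ and ramified at all but one infinite place, it builds a binary hermitian form $h_B(x,y)=\mathrm{Tr}(x\overline y)$ on $B\cong K^2$, extends it by an orthogonal line of norm $\det(h)/\det(h_B)$, and cites Scharlau's classification of hermitian forms to identify the result with $h$, so that $B^1$ sits inside $\mathrm{SU}(h)$ as the stabilizer of $W$. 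You instead run it from the orthogonal line: vary $a\in(E^+)^\times$ in the signature cone, use local universality of ternary hermitian forms plus Landherr's Hasse principle to represent $a$, and read off the quaternion algebra from $\mathrm{disc}(h)/a$ in $(E^+)^\times/N_{E/E^+}(E^\times)$, separating commensurability classes by forcing odd valuations at inert primes. Both are correct and rest on the same local--global input; yours trades the trace-form construction for the (standard but still to be justified) identification of $\mathrm{SU}$ of a binary hermitian form with the norm-one group of the quaternion algebra determined by its discriminant, and it has the mild advantage of producing the curves directly inside the given form $h$ rather than inside an abstractly isomorphic one.

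One point your sketch should make explicit rather than fold into ``the dictionary'': arithmetic lattices in $\mathrm{SU}(2,1)$ that are not of simple type (those arising from involutions of degree-$9$ division algebras) contain no totally geodesic curves at all, and this is a genuine input (Bergeron--Clozel, or M\"oller--Toledo) rather than something that falls out of the Hermitian-form parametrization. It is precisely here that the hypothesis of Theorem \ref{thm:Concise} does its work in the ball case, since for simple-type lattices your construction produces infinitely many classes unconditionally.
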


More specifically, we give a parametrization of the commensurability classes of geodesic curves that appear on a given Shimura surface. We note that there are indeed situations, for both $\mathbf{H}^2 \times \mathbf{H}^2$ and $\mathbf{H}_{\mathbb{C}}^2$, where there are no such curves. See Theorems \ref{thm:h2xh2} and \ref{thm:Hc2fuchsian} for the precise statements. We also note that this problem is equivalent to parametrizing the commensurability classes of what are often called $\mathbb{C}$-Fuchsian subgroups of $\pi_1(S)$.

These results originally appeared in a 2012 preprint \cite{CS2012}, where we applied Theorem \ref{thm:Concise} to prove a result on generating fundamental groups of Shimura surfaces using supports of divisors made up of geodesic curves. That application can be strengthened using very recent work of M\"oller--Toledo \cite{Moller--Toledo} or Koziarz--Maubon \cite{Koziarz--Maubon}. Since the classification of geodesic curves is of independent interest, we decided to write the present note.

We now make some historical remarks. Such parameterizations are known for other classes of arithmetic lattices. The first result of this kind was the parameterization of Fuchsian subgroups of arithmetic Kleinian groups by Maclachlan and Reid \cite{MR2}. Meyer solved the problem of parametrizing commensurability classes of geodesic hyperbolic submanifolds of certain arithmetic hyperbolic $n$-manifolds \cite{Meyer}. Very recently, Parkkonen and Paulin gave a classification for Picard modular groups \cite{Parkkonen--Paulin}. This is a special case of our work, but Parkkonen--Paulin state the classification in slightly different language; we describe the equivalence between the two, which follows from elementary quaternion algebra arithmetic, at the end of \S \ref{sec:Hc2}.

The paper is organized as follows. In \S \ref{sec:generalities}, we give some general facts on lattices in Lie groups and apply them to the cases of interest in this paper. In \S \ref{sec:HxH} we give the classification for $\mathbf{H}^2 \times \mathbf{H}^2$. In \S \ref{sec:Hc2}, we give the classification for $\mathbf{H}_{\mathbb{C}}^2$.

\section{Generalities on lattices}\label{sec:generalities}

Let $G$ be a semisimple Lie group with finite center and associated symmetric space $X$ of noncompact type. Then $X$ is called \emph{Hermitian symmetric} when it carries a $G$-invariant complex structure \cite[Ch.\ VIII]{Helgason}. The quotient of $X$ by any cocompact (resp.\ nonuniform) lattice $\Gamma$ in $G$ is a projective (resp.\ quasiprojective) algebraic variety \cite{Baily}. Suppose $H$ is a semisimple Lie subgroup of $G$ with finite center and associated symmetric space $Y$. There is then a totally geodesic embedding $Y \to X$, and the totally geodesic subspaces of $\Gamma \backslash X$ arise from the subgroups $H$ of this kind such that $\Gamma \cap H$ is a lattice in $H$; see \cite[\S1.4]{Vinberg}.

When $\Gamma$ is an \emph{arithmetic} subgroup of $G$ (see \cite[\S3.6]{Vinberg} for definitions), it follows that $\Gamma \cap H$ is an arithmetic subgroup of $H$. More precisely, suppose $K_0$ is a number field and that $\mathcal G$ is a $K_0$-algebraic group such that $\mathcal G(\mathbb{R} \otimes_{\mathbb{Q}} K_0)$ modulo compact factors is isomorphic to $G$. Let $\Gamma$ be an arithmetic lattice commensurable with the lattice of $K_0$-integral points of $\mathcal G$. Then connected totally geodesic submanifolds of $\Gamma \backslash X$ are determined by semisimple $F$-algebraic subgroups of $\mathcal G$ for certain $F \subseteq K_0$.

The cases of interest here are where $G$ is either $\mathrm{SL}_2(\mathbb{R}) \times \mathrm{SL}_2(\mathbb{R})$ or $\mathrm{SU}(2, 1)$. The respective hermitian symmetric domains are the product $\mathbf{H}^2 \times \mathbf{H}^2$ of two hyperbolic planes or the complex hyperbolic plane $\mathbf{H}_{\mathbb{C}}^2$. Therefore, the possible holomorphically embedded totally geodesic submanifolds are \emph{Fuchsian curves}, that is, quotients of $\mathbf{H}^2$ by cocompact arithmetic subgroups of $\mathrm{SL}_2(\mathbb{R})$. These lattices are often called \emph{arithmetic Fuchsian groups}. See \cite{Vigneras} and \cite{Maclachlan--Reid} for an account of the basic theory of arithmetic Fuchsian groups.

We recall the construction of lattices in $\mathrm{SL}_2(\mathbb{R})$ and $\mathrm{SL}_2(\mathbb{R}) \times \mathrm{SL}_2(\mathbb{R})$ in \S \ref{sec:HxH} and consider $\mathrm{SU}(2, 1)$ in \S\ref{sec:Hc2}.

\section{Lattices acting on $\mathbf H^2$ and $\mathbf H^2 \times \mathbf H^2$}\label{sec:HxH}

We recall from \cite[\S3]{Borel2} the construction of irreducible arithmetic lattices acting on products of hyperbolic planes. Let $\mathbf{H}^2$ be the hyperbolic plane and $G$ (resp.\ $X$) be the product of $n$ copies of $\mathrm{SL}_2(\mathbb{R})$ (resp.\ $\mathbf H^2$). Suppose that $K$ is a totally real number field and $A$ a quaternion algebra over $K$ that is ramified at exactly $n$ archimedean places of $K$. Then there is an algebra surjection $\pi:\mathbb{R} \otimes_K A \to \mathrm{M}_2(\mathbb{R})^n$ whose kernel is a product of $[K : \mathbb{Q}] - n$ copies of the Hamilton quaternion algebra over $\mathbb{R}$.
 
Let $\mathcal{O}$ be an order of $A$ and let $\mathcal{O}^1$ be the multiplicative subgroup of elements with reduced norm one in $\mathcal{O}$. Then $\pi(\mathcal{O}^1)$ is an irreducible lattice in $\mathrm{SL}_2(\mathbb{R})^n$. An irreducible lattice $\Gamma < \mathrm{SL}_2(\mathbb{R})^n$ is arithmetic when it is commensurable with a group $\pi(\mathcal{O}^1)$ constructed in the above way. Two arithmetic lattices $\Gamma$ are commensurable if and only if they have the same associated totally real field $k$ and quaternion algebra, modulo the action of $\mathrm{Aut}(k / \mathbb{Q})$ \cite[Thm.\ 8.4.7]{Maclachlan--Reid}. An arithmetic lattice $\Gamma$ is cocompact if and only if $A \not\cong \mathrm{M}_2(K)$.

When $n =1$ such $\Gamma$ are arithmetic Fuchsian groups. When $n = 2$, a lattice $\Gamma < G$ is irreducible if its projection onto any factor is dense in the analytic topology. A lattice fails to be irreducible in this case if and only if there is a finite index subgroup that is the direct product of two Fuchsian groups. All irreducible lattices in $G$ are arithmetic by Margulis's Arithmeticity Theorem \cite[p.\ 2]{Margulis}.

\begin{thm}\label{thm:h2xh2}
Let $\Gamma \subset \mathrm{SL}_2(\mathbb{R}) \times \mathrm{SL}_2(\mathbb{R})$ be an irreducible lattice defined via the quaternion algebra $A$ over the totally real number field $K$. Then, there is a one-to-one correspondence between
\begin{enumerate}

\item commensurability classes of arithmetic Fuchsian subgroups of $\Gamma$ and

\item degree two subfields $K_0 \subset K$ and $\mathrm{Aut}(K_0 / \mathbb{Q})$-isomorphism classes of $K_0$-subalgebras $B \subset A$ such that $A = K \otimes_{K_0} B$.

\end{enumerate}
In particular, $\Gamma$ contains infinitely many commensurability classes of arithmetic Fuchsian groups if and only if it contains one arithmetic Fuchsian subgroup.
\end{thm}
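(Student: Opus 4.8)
The plan is to translate the geometric problem into quaternion algebra arithmetic using the generalities of \S\ref{sec:generalities}, and then to carry out a descent-of-fields bookkeeping argument. First I would observe that a Fuchsian subgroup of $\Gamma$ corresponds to a semisimple $F$-algebraic subgroup $\mathcal{H}$ of the $K$-group $\mathcal{G} = \mathrm{SL}_1(A)$ such that $\mathcal{H}(\mathbb{R} \otimes_{\mathbb{Q}} F)$, modulo compact factors, is $\mathrm{SL}_2(\mathbb{R})$, for some subfield $F \subseteq K$. Since the ambient real group $\mathrm{SL}_2(\mathbb{R})^2$ has exactly two noncompact factors, the embedded $Y = \mathbf{H}^2$ sits diagonally (up to the geometric constraints), and the field $F$ over which $\mathcal{H}$ is defined must satisfy $[F:\mathbb{Q}] \cdot (\text{number of noncompact factors contributed per place}) $ accounting correctly; the upshot is that $F$ is a totally real field with $\mathcal{H}$ arithmetic Fuchsian, so $\mathcal{H} = \mathrm{SL}_1(B)$ for a quaternion algebra $B$ over a totally real field $K_0$ ramified at all but one archimedean place. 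The containment $\mathcal{H} \subset \mathcal{G}$ as $K_0 \subseteq K$ algebraic groups then forces, after extending scalars, an embedding of $B$ into $A$ compatible with the field extension, and irreducibility of $\Gamma$ is what rules out $K_0 = K$ and forces $[K:K_0] = 2$: if $K_0 = K$ then $B \subseteq A$ would be a quaternion subalgebra of a quaternion algebra over the same field, hence $B = A$, and then $\mathcal{H} = \mathcal{G}$ is not a proper curve; so $K_0$ is a proper subfield, and for the real points to come out as a single $\mathrm{SL}_2(\mathbb{R})$ (rather than a product) one needs $[K:K_0]=2$ with the two archimedean places of $K$ where $A$ splits lying over a single archimedean place of $K_0$ where $B$ splits. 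This gives $A \otimes_{K_0} K$-compatibility, i.e. $A \cong K \otimes_{K_0} B$.

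Next I would establish the correspondence in both directions and check it is well-defined on commensurability classes. Given the data in (2), namely $K_0 \subset K$ of degree two and $B \subset A$ with $A = K \otimes_{K_0} B$, the group $\pi(\mathcal{O}_B^1)$ for an order $\mathcal{O}_B \subset B$ is an arithmetic Fuchsian group, and $\mathcal{O}_B^1$ sits inside $\mathcal{O}^1$ for a suitable order $\mathcal{O} \supseteq \mathcal{O}_B$ of $A$; passing to a finite-index subgroup of $\Gamma$ commensurable with $\pi(\mathcal{O}^1)$ realizes it as a subgroup. Conversely, an arithmetic Fuchsian subgroup determines its invariant trace field and invariant quaternion algebra, which by the classification recalled from \cite[Thm.\ 8.4.7]{Maclachlan--Reid} are exactly $K_0$ and $B$ up to $\mathrm{Aut}(K_0/\mathbb{Q})$, and these are commensurability invariants; the embedding $B \hookrightarrow A$ is canonical up to conjugacy, which is absorbed into the isomorphism class. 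The Galois-action quotient on the $K_0$ side appears because the totally real field and quaternion algebra of an arithmetic Fuchsian group are only well-defined up to $\mathrm{Aut}(K_0/\mathbb{Q})$, matching the commensurability criterion.

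Finally, the ``in particular'' clause is now immediate and gives Theorem \ref{thm:Concise} in the $\mathbf{H}^2 \times \mathbf{H}^2$ case. If $\Gamma$ contains no arithmetic Fuchsian subgroup, there is nothing to prove. If it contains one, then by the correspondence there exists at least one degree-two subfield $K_0 \subset K$ and a quaternion subalgebra $B$ with $A = K \otimes_{K_0} B$. To produce infinitely many incommensurable ones it suffices to produce infinitely many non-isomorphic quaternion $K_0$-subalgebras $B' \subset A$ with $A = K \otimes_{K_0} B'$ (keeping $K_0$ fixed): one varies the ramification set of $B'$ among the finite places of $K_0$, subject to the constraint that $B' \otimes_{K_0} K$ remains isomorphic to $A$. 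Because $[K:K_0] = 2$, a finite place $v$ of $K_0$ either splits, is inert, or ramifies in $K$, and by the local-global control of quaternion algebras one can add (or remove) ramification at suitable pairs of places of $K_0$ — for instance two places that stay inert in $K$, or appropriately chosen split places — without changing the base change to $K$, while changing the $K_0$-isomorphism type; since there are infinitely many rational primes and hence infinitely many finite places of $K_0$ of each splitting behavior in the quadratic extension $K/K_0$, this yields infinitely many pairwise non-isomorphic $B'$, hence infinitely many pairwise incommensurable arithmetic Fuchsian subgroups of $\Gamma$.

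I expect the main obstacle to be the first paragraph: pinning down exactly why irreducibility of $\Gamma$ forces $[K:K_0]=2$ and why the semisimple $F$-subgroup must be of the form $\mathrm{SL}_1(B)$ for a quaternion algebra over a totally real field — this requires carefully matching the real points of $\mathcal{H}$ against a single copy of $\mathrm{SL}_2(\mathbb{R})$ inside $\mathrm{SL}_2(\mathbb{R})^2$ and ruling out reducible or higher-rank possibilities. The infinitude argument in the last paragraph also requires a genuine (though standard) computation with the exact sequence governing Brauer groups of the relevant fields to guarantee the ramification modifications are simultaneously realizable and preserve the base change; I would phrase this using the fact that a quaternion algebra over a number field is determined by a finite even set of places, together with the behavior of places under a quadratic extension.
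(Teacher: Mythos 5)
Your proposal follows essentially the same route as the paper: identify arithmetic Fuchsian subgroups with quaternion subalgebras $B$ over a subfield $K_0 \subset K$, count split archimedean places (each real place of $K_0$ having $[K:K_0]$ real places of $K$ above it, all with the same local behavior) to force $[K:K_0]=2$, and produce infinitely many commensurability classes by altering the ramification set of $B$ at finite places of $K_0$ that do not split in $K$, of which there are infinitely many by Cebotarev. One small correction to your last paragraph: you cannot modify ramification at ``appropriately chosen split places'' --- if $v$ splits in $K$ then $K_w = K_{0,v}$ for $w \mid v$, so ramification of $B$ at $v$ is detected by $B \otimes_{K_0} K \cong A$; only the non-split places are usable (a quadratic extension of a local field splits the local quaternion division algebra), which is exactly the mechanism the paper uses and which your inert-pairs construction already supplies.
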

\begin{proof}
We analyze the possible simple subalgebras of $A$. We first argue that a subalgebra as in \emph{2}.\ always produces an arithmetic Fuchsian subgroup of $\Gamma$. This is equivalent to showing that every such subalgebra splits at exactly one real place of $K_0$. This follows immediately from analyzing the possibly splitting behavior of quaternion algebras under tensor products. Indeed, at real place of $K_0$ at which $B$ is isomorphic to Hamilton's quaternions (resp.\ $\mathrm{M}_2(\mathbb{R})$) cannot extend to a real place of $K$ at which $A$ is isomorphic to $\mathrm{M}_2(\mathbb{R})$ (resp.\ Hamilton's quaternions), since Hamilton's quaternions do not embed in $\mathrm{M}_2(\mathbb{R})$ and vice versa.

Conversely, let $\Sigma$ be an arithmetic Fuchsian group with associated totally real field $K_0$ and $K_0$-quaternion algebra $B$, and suppose that $\Sigma$ is a Fuchsian subgroup of $\Gamma$. The inclusion of $\Sigma$ into $\Gamma$ induces an embedding of algebras $B \to A$. Indeed, consider
\[
\Sigma \subset \Gamma \subset A,
\]
where we can assume that $\Gamma \subset A$ possibly after passing to a subgroup of finite index, which has no effect on the commensurability classes of arithmetic Fuchsian subgroups. If $K_0$ is the invariant trace field of $\Sigma$ (see \cite{Maclachlan--Reid, MR2}), then the $K_0$-subalgebra of $A$ generated by $\Sigma$ is $K_0$-isomorphic to $B$. It follows immediately that $A = K \otimes_{K_0} B$, so we must show that $K_0$ is a quadratic subfield of $K$. However, $B$ splits at exactly one infinite place of $K_0$, since it is the quaternion algebra associated with an arithmetic Fuchsian group. Then the number of real places of $K$ at which $K \otimes_{K_0} B$ splits is $[K : K_0]$, which is $2$ by assumption on $\Gamma$, so $K_0$ is a quadratic subfield of $K$. Therefore, Fuchsian subgroups of $\Gamma$ arise from subalgebras as in \emph{2}.

The last statement of the theorem can be see as follows. Let $B'$ be another quaternion algebra over $K_0$ such that the places of $K_0$ that ramify in exactly one of $B$ or $B'$ do not split in $K$. Then $K \otimes_{K_0} B \cong K \otimes_{K_0} B'$, so $B'$ defines another arithmetic Fuchsian subgroup of $\Gamma$. Taking $B'$ not $\mathrm{Aut}(K_0 / \mathbb{Q})$-conjugate to $B$ implies that the associated arithmetic Fuchsian subgroups form a new commensurability class in $\Gamma$. The Cebotarev density theorem implies that there are infinitely many primes of $K_0$ that do not split in $K$, hence there are infinitely many distinct choices of $B'$ that produce distinct commensurability classes.
\end{proof}

\begin{rem}
The easiest way to build arithmetic subgroups of $\mathrm{SL}_2(\mathbb{R}) \times \mathrm{SL}_2(\mathbb{R})$ containing no arithmetic Fuchsian subgroups is to take $K$ a totally real field with odd degree over $\mathbb{Q}$ and take a $K$-quaternion algebra unramified at exactly two real places. Then $K$ contains no quadratic subfields, hence no arithmetic Fuchsian subgroups.
\end{rem}

\section{Lattices acting on $\mathbf{H}_{\mathbb{C}}^2$}\label{sec:Hc2}

First, we quickly recall the standard construction of $\mathbf{H}_{\mathbb{C}}^2$. See \cite[Ch.\ 3]{Goldman} for further details. Let $V$ be a rank $3$ vector space over $\mathbb{C}$ and $h$ a hermitian form on $V$ of signature $(2, 1)$. If $V_-$ denotes the subspace of $h$-negative vectors, then $\mathbf{H}_{\mathbb{C}}^2$ is the space $\mathbb{P}(V_-)$ of $h$-negative lines. There is a natural biholomorphism from $\mathbf{H}_{\mathbb{C}}^2$ with the metric determined by $h$ to the unit ball in $\mathbb{C}^2$ with the Bergman metric.

Totally geodesic holomorphic embeddings of $\mathbf{H}^2$ in $\mathbf{H}_{\mathbb{C}}^2$ arise from rank $2$ subspaces of $V$ on which the restriction of $h$ to $V$ has signature $(1, 1)$. Taking the $h$-orthogonal complement, it follows that holomorphic totally geodesic embeddings of $\mathbf{H}^2$ into $\mathbf{H}_{\mathbb{C}}^2$ are in one-to-one correspondence with $h$-positive lines in $V$. See \cite[\S3.3.1]{Goldman} and \cite[\S12.2]{Bergeron--Clozel}.

There are two constructions of arithmetic subgroups of $\mathrm{SU}(2, 1)$. By \cite[Ch.\ 8]{Bergeron--Clozel} or \cite{Moller--Toledo}, the only arithmetic lattices in $\mathrm{SU}(2, 1)$ that contain totally geodesic surfaces are those of so-called \emph{simple type} (also sometimes called \emph{first type}). These are defined as follows.

Recall that a \emph{CM-pair} $K / K_0$ is a totally imaginary quadratic extension of a totally real number field. Let $z \mapsto \overline z$ be the nontrivial automorphism of $K$ over $K_0$ and $h$ be a hermitian form on $K^3$ such that $h$ is indefinite at precisely one $\mathrm{Gal}(K / K_0)$-conjugate pair of embeddings of $K$ into $\mathbb{C}$. If $\mathcal{O}$ is the integer ring of $K$, the $K_0$-algebraic group
\[
\mathrm{SU}(h) = \{ x \in \mathrm{SL}_3(K)\ :\ {}^t \overline x h x = h \},
\]
contains the discrete subgroup $\Gamma_{\mathcal{O}}^1 = \mathrm{SU}(h) \cap \mathrm{SL}_r(\mathcal{O})$. Then $\Gamma_{\mathcal{O}}^1$ projects to a lattice in $\mathrm{SU}(2, 1)$, since $h$ is indefinite above exactly one real embedding of $K_0$. It is known that the commensurability class of $\Gamma_{\mathcal{O}}^1$ in $\mathrm{SU}(2, 1)$ depends only on $K$, not the choice of hermitian form (see \cite[\S1.2]{Prasad--Yeung}). The lattice $\Gamma_{\mathcal{O}}^1$ is cocompact if and only if $K_0 \neq \mathbb{Q}$.

Recall that the arithmetic quotients of $\mathbf{H}_{\mathbb{C}}^2$ that are not of simple type contain no totally geodesic curves. The following classifies and parametrizes the possible arithmetic Fuchsian subgroups of arithmetic lattices in $\mathrm{SU}(2, 1)$ of simple type.

\begin{thm}\label{thm:Hc2fuchsian}
Let $\Gamma < \mathrm{SU}(2, 1)$ be an arithmetic lattice of simple type with associated CM-pair $K / K_0$. Then, there is a one-to-one correspondence between
\begin{enumerate}

\item commensurability classes of arithmetic Fuchsian subgroups of $\Gamma$ corresponding to totally geodesic projective algebraic curves on $\Gamma \backslash \mathbf H_{\mathbb{C}}^2$ and

\item $\mathrm{Aut}(K_0 / \mathbb{Q})$-isomorphism classes of $K_0$-quaternion algebras $A$ ramified at all but one infinite place of $K_0$ and at any finite set of nonarchimedean places that do not split in $K / K_0$.

\end{enumerate}
In particular, any such $\Gamma$ contains infinitely many distinct commensurability classes of such arithmetic Fuchsian subgroups.
\end{thm}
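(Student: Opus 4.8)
The plan is to mirror the analysis carried out for $\mathbf{H}^2 \times \mathbf{H}^2$ in Theorem \ref{thm:h2xh2}, but now the ``large'' algebraic group is $\mathrm{SU}(h)$ over $K_0$ rather than a quaternion algebra, so the role of subalgebras is played by the $K_0$-algebraic subgroups of $\mathrm{SU}(h)$ whose real points give $\mathrm{SL}_2(\mathbb{R})$ up to compact factors. By the geometric discussion recalled just before the theorem, a totally geodesic holomorphic curve in $\Gamma \backslash \mathbf{H}_{\mathbb{C}}^2$ corresponds to an $h$-positive line $L \subset K^3$ defined over $K_0$ (equivalently, a rank-two $K$-subspace $W = L^\perp$ on which $h$ has signature $(1,1)$ at the distinguished place). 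The stabilizer $\mathrm{SU}(h|_W) \times \mathrm{SU}(h|_L)$ inside $\mathrm{SU}(h)$ has as its first factor a $K_0$-form of $\mathrm{SL}_2$; such forms are exactly $A^1$ for $A$ a quaternion algebra over $K_0$, and its commensurability class of arithmetic subgroups is determined by $A$ up to $\mathrm{Aut}(K_0/\mathbb{Q})$ (by \cite[Thm.\ 8.4.7]{Maclachlan--Reid}). So the correspondence in the statement is: the Fuchsian subgroup $\Gamma \cap \mathrm{SU}(h|_W)$ is attached to the quaternion algebra $A = A(W)$.

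The first main step is to determine the ramification constraints on $A = A(W)$. At the archimedean places: $h|_W$ must have signature $(1,1)$ at the one distinguished real place of $K_0$ (so that $Y = \mathbf{H}^2$ there) and must be definite at every other real place (since $h$ itself is definite there), which translates into $A$ being split at exactly one infinite place of $K_0$ and ramified at all the others. At the nonarchimedean places, the key point is that $A$ is obtained by restriction of scalars considerations from $K$: the algebra $\mathrm{SU}(h)$ is ``split by $K$'' in the sense that $K \otimes_{K_0} \mathrm{SU}(h) \cong \mathrm{SL}_3$ over $K$, and correspondingly $A \otimes_{K_0} K$ must split, i.e.\ $K$ is a splitting field for $A$. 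A quaternion algebra over $K_0$ is split by the quadratic extension $K$ if and only if every finite place of $K_0$ ramifying in $A$ is non-split in $K/K_0$. This is precisely the constraint appearing in item \emph{2}, and conversely any quaternion algebra $A$ over $K_0$ meeting these two conditions embeds into $\mathrm{SU}(h)$ for a suitable $h$ in the fixed commensurability class (using that this class is independent of $h$, by \cite[\S1.2]{Prasad--Yeung}); this embedding produces a rank-two subspace $W$ of the required signature, hence a totally geodesic curve. Realizing $A$ inside the fixed $\Gamma$ (rather than merely inside some lattice in the commensurability class) costs only passage to a finite-index subgroup, which does not affect commensurability classes of Fuchsian subgroups.

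I expect the main obstacle to be precisely this embedding problem: showing that the two arithmetic invariants (the archimedean signature data and the ``split by $K$'' condition at finite places) are not merely necessary but sufficient for $A^1$ to occur as the derived group of an $h$-rational subgroup of $\mathrm{SU}(h)$. Concretely one must show that given such an $A$, there is a rank-three hermitian $K/K_0$-space in the prescribed local classes containing a rank-two $K$-subspace $W$ with $\mathrm{SU}(h|_W) \cong A^1$; this is a Hasse-principle-type statement for hermitian forms over the CM-pair $K/K_0$, and the compatibility of the local hermitian data with the local quaternion data at each place of $K_0$ (archimedean and finite, split and inert) is where the real work lies. Finally, the ``in particular'' clause is immediate: by the Chebotarev density theorem there are infinitely many finite places of $K_0$ inert in $K/K_0$, so one may ramify $A$ at arbitrarily large such sets (keeping the archimedean behavior fixed and adjusting one auxiliary place for parity), producing infinitely many pairwise non-isomorphic, hence pairwise incommensurable, Fuchsian subgroups.
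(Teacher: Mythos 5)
Your forward direction is essentially the paper's: a totally geodesic curve comes from a rank-two $K$-subspace $W \subset K^3$ of signature $(1,1)$ at the distinguished place, the associated Fuchsian group generates a $K_0$-quaternion algebra $A$ inside $\mathrm{M}_2(K)$, and the two ramification constraints (split at exactly one infinite place; split by $K$, hence ramified only at finite places that are non-split in $K/K_0$) follow exactly as you say. The ``in particular'' clause via Chebotarev also matches the paper.

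The genuine gap is the converse, and you name it yourself: you assert that every $A$ satisfying the two conditions ``embeds into $\mathrm{SU}(h)$ for a suitable $h$,'' and then say that proving this Hasse-principle-type compatibility ``is where the real work lies'' --- but you never do that work, so the surjectivity of your correspondence is unproved. The paper closes this gap with a short explicit construction rather than a local-global argument for hermitian forms: since $K$ splits $A$, the field $K$ embeds in $A$, making $A$ a two-dimensional left $K$-vector space; the nontrivial automorphism of $K/K_0$ extends to an anti-involution $z \mapsto \overline{z}$ of $A$, and $h_A(x,y) = \mathrm{Tr}(x\overline{y})$ is a hermitian form on $A \cong K^2$ whose unitary group contains $A^1$. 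One then extends $h_A$ to a rank-three form $H_A = h_A \perp \langle \det(h)/\det(h_A) \rangle$ and invokes the classification of hermitian forms over a CM extension (they are determined by rank, determinant, and archimedean signatures; the reference is \cite[Ch.\ 10, Ex.\ 1.8(iii)]{Scharlau}) to conclude $H_A \cong h$, which places $A^1$ inside $\mathrm{SU}(h)$ as the stabilizer of $W$. That classification result is precisely the ``Hasse principle'' you anticipated needing; without citing it, or carrying out the trace-form construction that reduces to it, your proposal establishes only that the conditions in item \emph{2} are necessary, not that they are sufficient.
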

\begin{proof}
Assume that $h$ is a hermitian form on $V = K^3$ that is indefinite above exactly one archimedean place of $K_0$. Let $W \cong K^2 \subset V$ be the $h$ orthogonal complement of a line in $V$ that is $h$-positive at the archimedean place of $K_0$ over which $h$ is indefinite, and let $h_W$ be the restriction of $h$ to $W$. By our earlier discussion of holomorphic embeddings of $\mathbf{H}^2$ into $\mathbf{H}_{\mathbb{C}}^2$, the pair $(W,h_W)$ determines a maximal arithmetic Fuchsian subgroup $\Sigma_W$ of $\Gamma$ associated with a holomorphic curve on $\Gamma \backslash \mathbf{H}_{\mathbb{C}}^2$, and all such curves arise in this way. We now proceed to determine the $K_0$-quaternion algebra associated to this arithmetic Fuchsian group $\Sigma_W$.

There is a natural homomorphism from $\Sigma_W$ to $\mathrm{SL}_2(K)$. Furthermore, the $K_0$-subalgebra $B$ of $\mathrm{M}_2(K)$ generated by $\Sigma_W$ is four-dimensional and noncommutative, so it is a quaternion algebra. Since $h_W$ is indefinite at exactly one infinite place of $K_0$ and $B$ embeds into $\mathrm{M}_2(K)$ (i.e., $K$ splits $B$), $B$ must satisfy condition \emph{2}.\ of the theorem.

Conversely, suppose $B$ is a quaternion algebra over $K_0$ satisfying the conditions in \emph{2}.\ We must show that there is a commensurability class of arithmetic Fuchsian subgroups of $\Gamma$ associated with $B$. Condition \emph{2}.\ implies that we have an embedding of $K$ into $B$, so $B$ is a two-dimensional left vector space over $K$. The nontrivial automorphism of $K$ over $K_0$ extends to an anti-involution $z \to \overline{z}$ of $B$ over $K_0$.

There is a left $K$-linear action of $B$ on itself under which $z \in B$ acts on $x \in B$ by $x \mapsto x \cdot \overline{z}$. This embeds $B$ into $\mathrm{Mat}_2(K)$. Let $\mathrm{Tr}:B \to K$ be the composition of this embedding with the trace. Identifying $B$ with $K^2$, we obtain a hermitian form on $K^2$ by
\[
h_B(x, y) = \mathrm{Tr}(x \overline{y}),
\]
for $x, y \in B$.

Now, consider the hermitian form $H_B$ on $K^3$ defined as follows. Write $K^3 = W \oplus \ell$, where $W$ is two-dimensional and $\ell$ is a line. Let the restriction of $H_B$ to $W$ be $h_B$, let $\ell$ be $H_B$-orthogonal to $W$, and let the $H_B$-norm of some generator of $\ell$ be the totally positive number $\det(h) / \det(h_B)$ in $K_0$. By \cite[Ch.\ 10, Ex.\ 1.8(iii)]{Scharlau}, $K^3$ equipped with $H_B$ is isomorphic as a hermitian vector space to $K^3$ equipped with $h$. This realizes the elements $B^1$ of with with reduced norm $1$ inside $\mathrm{SU}(h)$ as the stabilizer of the subspace $W$. Then, $\Gamma \cap B$ is an arithmetic Fuchsian subgroup of $\Gamma$ with associated quaternion algebra $B$. This completes the proof.
\end{proof}

\begin{rem}
In \cite{Parkkonen--Paulin}, Parkkonen and Paulin recently parametrized the $\mathbb{C}$-Fuchsian subgroups of the Picard modular groups. Recall that the Picard modular groups define the commensurability classes of nonuniform arithmetic lattices in $\mathrm{PU}(2, 1)$, and are defined by hermitian forms over imaginary quadratic fields. In other words, the CM pair is $K / \mathbb{Q}$ for some imaginary quadratic field $K$.

Theorem \ref{thm:Hc2fuchsian} identifies Fuchsian subgroups of a Picard modular groups with $\mathbb{Q}$-quaternion algebras $A$ that are unramified at the infinite place of $\mathbb{Q}$ and that ramify at a (possibly empty) set of rational primes that do not split in $K$. In particular, $K$ embeds as a subfield of $A$. Standard arguments show that $A$ then has a Hilbert symbol of the form
\[
A = \begin{pmatrix} \frac{n, D_K}{\mathbb{Q}} \end{pmatrix},
\]
where $n$ is a positive square-free integer and $D_K$ is the discriminant of $K$, which is the classification given in \cite{Parkkonen--Paulin}.
\end{rem}

\subsection*{Acknowledgments}
Chinburg was supported by the National Science Foundation under Grant Numbers DMS 1360767 and DMS 1265290. Stover was supported by the National Science Foundation under Grant Number NSF 1361000. Stover also acknowledges support from U.S. National Science Foundation grants DMS 1107452, 1107263, 1107367 ``RNMS: GEometric structures And Representation varieties'' (the GEAR Network).

\bibliography{GeodesicCurves}

\end{document}